\newtheorem{theorem}{Theorem}[section]
\newtheorem{proposition}[theorem]{Proposition}
\newtheorem{lemma}[theorem]{Lemma}
\theoremstyle{definition}
\theoremstyle{definition}
\theoremstyle{remark}
\newcommand{\PP}{\mathcal P}
\newcommand{\LL}{\mathcal L}
\newcommand{\zR}{\mathbb R}
\newcommand{\zN}{\mathbb N}
\renewcommand{\H}{H(\mathbb{C})}
\newcommand{\C}{\mathbb C}
\newcommand{\zZ}{\mathbb Z}
\newcommand{\sub}{\subseteq}
\newcommand{\zT}{\mathbb T}
\newcommand{\f}{\frac}
\renewcommand{\l}{\left(}
\renewcommand{\r}{\right)}
\title{Hypercyclic homogeneous polynomials on $\H$.}
\author{Rodrigo Cardeccia, Santiago Muro}
\thanks{Partially supported by ANPCyT PICT 2015-2224, UBACyT 20020130200052BA and CONICET}
\address{DEPARTAMENTO DE MATEM\'ATICA - PAB I,
FACULTAD DE CS. EXACTAS Y NATURALES, UNIVERSIDAD DE BUENOS AIRES, (1428) BUENOS AIRES, ARGENTINA AND CONICET} \email{rcardeccia@dm.uba.ar} \email{smuro@dm.uba.ar}
\begin{document}
 \begin{abstract}
It is known that homogeneous polynomials on Banach spaces cannot be hypercyclic, but there are examples of hypercyclic homogeneous polynomials on some non-normable Fr\'echet spaces. We show the existence of  hypercyclic polynomials on $\H$, by exhibiting a concrete polynomial which is also the first example of a frequently hypercyclic homogeneous polynomial on any $F$-space.  
 We prove that the homogeneous polynomial on $\H$ defined as the product of a translation operator and the evaluation at 0  is mixing, frequently hypercyclic and chaotic. We prove, in contrast, that 
some natural related polynomials fail to be hypercyclic.
\end{abstract}

\subjclass[2010]{
47H60  %Multilinear and polynomial operators
37F10, %Complex dynamical systems - Polynomials; rational maps; entire and meromorphic functions
47A16, %Cyclic vectors, hypercyclic and chaotic operators
30D20, %Functions of a complex variable, Entire functions, general theory 
30K99  % Functions of a complex variable -Universal holomorphic functions -None of the above, but in this section
}
\keywords{frequently hypercyclic operators, homogeneous polynomials, entire functions, universal functions}
 \maketitle
 \section{Introduction}
 Let $X$ be an $F$-space. A function $T:X\rightarrow X$ is said to be \emph{hypercyclic} if there exists $x\in X$ such that its orbit, $Orb_T(x):=\{T^n(x):n\in\zN\},$ is dense in $X$. In this case, $x$ is called a \emph{hypercyclic vector}. The space $\H$ of entire functions, with the compact open topology, was of crucial importance since the beginnings of the theory of hypercyclic linear operators. Indeed, the first example of a hypercyclic operator was found by Birkhoff in \cite{Bir29}. There, he showed that there exists an entire function $g\in H(\C)$ whose translations by natural numbers approximate uniformly on compact sets any other entire function, i.e. the translation operator $\tau_1f(z)=f(z+1)$ acting on the space of entire functions $\H$ is hypercyclic. Later, MacLane \cite{Mac52} exhibited  the second example of a hypercyclic operator, also on $\H$, proving that the differentiation operator $Df(z)= f'(z)$  is also hypercyclic. 
 
 At the beginning of the 1990 decade, the theory of hypercyclic operators began to have a great development. An article that inspired much of the subsequent work was the  seminal paper of Godefroy and Shapiro \cite{GodSha91}, where the authors proved (among other things) an important generalization of the results of Birkhoff and MacLane. More recently, the concept of frequently hypercyclic operator was introduced in \cite{BayGri04}, and shortly after, the operators considered by Birkhoff, MacLane, Godefroy and Shapiro were shown to be also frequently hypercyclic \cite{BayGri06,BonGro06}. {For a systematic treatment of hypercyclic operators and related topics see the recent books \cite{GroPer11,BayMat09} and the references therein.}

 As a natural extension of the linear theory, one may study orbits of (non-linear)  polynomial operators on $F$-spaces.
 The first results  were obtained by Bernardes in \cite{Ber98}, in the context of homogeneous polynomials acting on Banach spaces. Maybe surprisingly, he showed that no homogeneous
 polynomial, of degree $\ge 2$, acting on a Banach space can be hypercyclic. In contrast, if the $F$-space is not normable, it may support hypercyclic homogeneous polynomials. The first to realize this fact was Peris \cite{Per99,Per01}. As it is natural, the space where he sought a homogeneous hypercyclic polynomial was $\H$. Unfortunately, the example he gave was not well defined. However, he was able to construct another example, this time on the space $\C^\zN$, the Fr\'echet space of all complex sequences. He showed  that the polynomial $(a_n)\mapsto (a_{n+1}^2)$ is not only hypercyclic but also chaotic on $\C^\zN$.
   
 After the example of Peris, some other hypercyclic homogeneous polynomials were presented, on some K\"othe echelon spaces (including the space $H(\mathbb D)$, see \cite{MarPer10}) and on some spaces of differentiable functions on the real line \cite{AroMir08}. But there are, up to our knowledge, no examples of hypercyclic homogeneous polynomials on $\H$. There are also no examples of frequently hypercyclic homogeneous polynomials on any $F$-space. Given the key role of $\H$ in the theory of linear dynamics, we believe it is desirable
 to exhibit examples of hypercyclic homogeneous polynomials on $\H$.
  
 There are also some other articles investigating the dynamics of non-homogeneous polynomials (\cite{bernal2005backward,BerPer13,jung2017mixing,kim2012numerically,MarPer09,MarPer10,peris2003chaotic}) and of	 multilinear mappings (\cite{BesCon14,GroKim13}) on infinite dimensional spaces. For example, in the recent paper \cite{BerPer13}, the existence of hypercyclic (non-homogeneous) polynomials of arbitrary positive degree is shown on any infinite dimensional Fr\'echet space.
 
 In this note we show that the 2-homogeneous polynomial $P(f)=f(0)\cdot\tau_1f$ defined on $\H$ is  mixing, chaotic and frequently hypercyclic. 
 In contrast, we prove that the polynomial $P(f)=f(0)\cdot f'$ is not hypercyclic on $\H$.

\section{A hypercyclic polynomial on $\H$}
In this section we prove Theorem \ref{main}, our main result, which states that there is a very natural hypercyclic homogeneous polynomial on $\H$. Let us first recall some definitions.
If $T$ is a mapping acting on a topological space $X$, $T$ is said to be \emph{transitive} if for each nonempty open sets $U,V\subset X$ there exists $n\in \zN$ such that
$T^n(U)\cap V\ne\emptyset$. If there exists $n_0$ such that $T^n(U)\cap V\ne\emptyset$ for every $n\geq n_0$, the mapping is said to be \emph{mixing}.
Clearly a mixing map is transitive and by Birkhoff's Transitivity Theorem, if the map is continuous and the underlying space is a complete separable metric space without isolated points, then
the map is transitive if and only if it is hypercyclic.

A set $A\sub \zN$ is said to have positive \emph{lower density} if
$$\liminf_n \f{\#\{x\in A: 0\leq x\leq n\}}{n}>0,$$
where $\#$ denotes the cardinality of the set. We say that a map $T$ is \emph{frequently hypercyclic} if there exists some $x\in X$ such that for every nonempty open set $U$, the set
$\{n\in\mathbb N:T^n(x)\in U\}$ has positive lower density.
Finally $T$ is said to be \emph{chaotic} if it is hypercyclic and has a dense set of periodic vectors. 

Let $X$ be an $F$-space. A mapping $P:X\to X$ is said to be a $d$-homogeneous polynomial, $P\in\mathcal P(^dX)$, if
$P$ is the restriction to the diagonal of some $d$-multilinear map $L\in \LL(^d X;X)$, that is, 
$$P(x)=L(\overbrace{x,...,x}^d).$$ 

We will be dealing with homogeneous polynomials acting on the space $\H$ of entire functions, which endowed with the compact open topology is a Fr\'echet space. The seminorms
$$\|f\|_K:=\sup_{z\in K} |f(z)|,$$
where $K$ is a  compact set, define the topology in $\H$. Thus, the sets 
$$U_{\epsilon,f,R}=\{h\in\H: \|h-f\|_{B(0,R)}<\epsilon\},$$
with $\epsilon,R>0$ form a basis of open neighborhoods of $f\in\H$.

\begin{theorem}\label{main}
The polynomial $P \in \PP(^2\H)$ defined by $$P(f)(z)=f(0)\cdot f(z+1)$$ is mixing, chaotic and frequently hypercyclic.
\end{theorem}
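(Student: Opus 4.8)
The plan is to understand the dynamics of iterating $P(f)(z) = f(0)\cdot f(z+1)$. The key observation is that the iterates have an explicit form: if we write $\tau = \tau_1$ for the translation, then $P^n(f)(z) = \prod_{k=0}^{n-1} f(k) \cdot f(z+n)$, since each application multiplies by the current value at $0$ and then translates. So $P^n(f) = c_n(f)\cdot \tau^n f$ where $c_n(f) = \prod_{k=0}^{n-1} f(k)$ is a scalar depending on $f$. Thus the orbit of $f$ under $P$ is a sequence of scalar multiples of the Birkhoff orbit $\{\tau^n f\}$. The heuristic is: $\tau^n$ is already mixing/frequently hypercyclic on $\H$, and we just need to arrange that the scalar factors $c_n(f)$ can be made close to $1$ (or to any prescribed target) while simultaneously controlling $\tau^n f$.

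I would prove all three properties by a single constructive/approximative argument rather than invoking a general transference principle, since $P$ is nonlinear. For \emph{mixing}: given basic open sets $U = U_{\epsilon, g, R}$ and $V = U_{\delta, h, R}$, I want, for all large $n$, some $f \in U$ with $P^n(f) \in V$. I would look for $f$ of the form $f = g + (\text{small correction supported far out})$; by a Runge-type / interpolation argument (Birkhoff's theorem machinery) I can find an entire function that agrees with $g$ on $B(0,R)$ up to $\epsilon$, agrees with $\prod_{k=0}^{n-1}f(k)$-normalized version of $h(\cdot - n)$ on $B(n, R)$, and whose values at the integers $0, 1, \dots, n-1$ are controlled so that the product $c_n(f)$ is as close to $1$ as desired. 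The circularity (the values $f(k)$ appear both in $c_n$ and are what we're prescribing) is handled because for $k < R$ the values $f(k) \approx g(k)$ are essentially fixed by $U$, while for $R \le k \le n-1$ we have free choice of $f(k)$ and can force the running product to stay near a target; a finite-dimensional fixed-point or direct bookkeeping argument closes this. Then on $B(0,R)$, $P^n(f)(z) = c_n(f)\, f(z+n) \approx 1\cdot h(z)$.

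For \emph{chaos} I need a dense set of periodic points. A point $f$ is periodic of period $n$ iff $P^n(f) = f$, i.e. $c_n(f)\cdot f(z+n) = f(z)$ for all $z$. Besides $f \equiv 0$ and constants $f\equiv 1$, one can build nonzero periodic functions: try $f$ of the form $f(z) = e^{a(z)}$ where $a$ is entire with $a(z+n) + \ell = a(z)$ for a suitable constant forcing periodicity, or more concretely solve the functional equation directly. The cleanest route: show that finite products/combinations yielding functions with $f(k) \ne 0$ for $k=0,\dots,n-1$, $\prod_{k=0}^{n-1} f(k) = 1$, and $f(z+n)=f(z)$ — for instance scaled exponentials of $n$-periodic entire functions such as $\exp(\alpha(e^{2\pi i z/n}-1))$ times a correction making the product-at-integers equal one — are $P$-periodic, and that such functions are dense by another Runge approximation on $B(0,R)$ (approximate a target $g$ on $B(0,R)$, with $R < n$ large, leaving the behaviour for $|z|$ large free to impose periodicity). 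For \emph{frequent hypercyclicity} I would verify a Frequent Hypercyclicity Criterion adapted to this polynomial, or more directly build a single frequently hypercyclic vector: fix a dense sequence $(h_j)$ in $\H$, choose a disjoint family of arithmetic-progression-like index sets $A_j \subset \zN$ of positive lower density with large gaps, and construct $f$ as a convergent sum of bumps placed around the blocks $[n, n+R_j]$ for $n \in A_j$, arranged so that $P^n f \approx h_j$ for $n \in A_j$; the scalar products $c_n(f)$ are kept near $1$ throughout by the same bookkeeping as above.

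The main obstacle is precisely the coupling between the scalar factor $c_n(f) = \prod_{k=0}^{n-1} f(k)$ and the function $f$ itself: unlike the linear case, one cannot freely rescale $f$ without destroying the constraint $f \in U$, and a product of $n$ terms can blow up or vanish catastrophically from tiny perturbations. The heart of the argument is a careful lemma showing that, given the constrained values $f(0),\dots,f(\lceil R\rceil - 1)$ (forced to be near $g$'s values, hence possibly with product far from $1$), one can choose the remaining integer values $f(\lceil R\rceil), \dots, f(n-1)$ — which are otherwise unconstrained by the neighborhoods in play — to steer the running product to land exactly (or within tolerance) on the desired constant, all while keeping $f$ entire and small where it must be small; this is where Runge's theorem and a quantitative control of $|f(k)|$ at the free integers do the real work.
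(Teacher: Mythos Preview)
Your overall strategy---control the integer values via Runge so that the scalar factor is close to $1$, then rely on Birkhoff-style approximation for the translation part---is exactly the paper's approach for all three properties. However, there is a genuine error at the very first step that undermines the details of everything that follows.

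You write $c_n(f)=\prod_{k=0}^{n-1}f(k)$. This is wrong. Compute one more iterate: $P^2(f)(z)=P(f)(0)\cdot P(f)(z+1)=\bigl(f(0)f(1)\bigr)\cdot\bigl(f(0)f(z+2)\bigr)=f(0)^2f(1)\,f(z+2)$, and in general
\[
c_n(f)=f(0)^{2^{n-1}}f(1)^{2^{n-2}}\cdots f(n-1)^{2^0},
\]
because at each step the previous scalar gets \emph{squared} before the new factor is appended. These exponential weights change the problem qualitatively: the values $f(0),\dots,f(\lfloor R\rfloor)$, which are essentially pinned down by the neighborhood $U$, enter $c_n(f)$ raised to powers of order $2^{n}$, so an error of size $\epsilon$ in those values perturbs $c_n(f)$ multiplicatively by something like $(1+\epsilon)^{2^{n}}$. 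Your ``direct bookkeeping / running product'' picture, which treats all integer values symmetrically, does not survive this.

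The paper resolves this as follows. It fixes the constrained values (near $g(0),\dots,g(\lfloor R\rfloor)$ on the left and near $\tilde g(n-\lfloor R\rfloor),\dots,\tilde g(n-1)$ on the right), computes exactly what the resulting product of the constrained factors is, and then cancels it using a \emph{single} free integer value: it sets $f(\lfloor R\rfloor+1)\approx 1/\alpha$ where $\alpha$ is a $2^{\,n-\lfloor R\rfloor-2}$-th root of that constrained product, and sets all remaining free integer values to $1$. The point is that the weight at position $\lfloor R\rfloor+1$ is $2^{\,n-\lfloor R\rfloor-2}$, precisely the exponent needed to absorb the constrained part. The ``circularity'' you worry about is then handled not by a fixed-point argument but by a limiting procedure: one builds a sequence $p_k$ via Runge with tolerances $1/k$, so that $c_n(p_k)\to 1$ as $k\to\infty$, and picks $k$ large enough. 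The same root-extraction device is used in the chaotic and frequently-hypercyclic parts. Once you correct the iterate formula and incorporate this mechanism, your outline becomes essentially the paper's proof.
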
 
 Observe that $P^n(f)(z)=c_n(f) f(z+n)$ where 
 \begin{equation}\label{c_n}
 c_n(f)=f(0)^{2^{n-1}}\cdot f(1)^{2^{n-2}}\cdot\ldots \cdot f(n-1).
 \end{equation}

 \subsection*{Proof that $P$ is mixing}
 Let $U$ and $V$ be open sets. We can suppose that 
 $U=U_{\epsilon,f,R}$ and $V=U_{\epsilon,g,R}.$
 Also we may suppose that $R\notin \zN$ and that  $f$ and $g$ do not have zeros in $\zZ$. 
 
 By Runge's Theorem we can find, for $n$ large enough, a polynomial $p$  such that $p\in U$ and $p(\cdot+n)\in V$. We assert that a more careful application of Runge's Theorem allows us to obtain a polynomial $p$ that also satisfies $c_n(p)\sim 1$.
 
Let $n_0\in\zN$ such that $n_0>2R+2$, and fix $n\ge n_0$. This implies that $B(0,R)\cap B(n,R)=\emptyset$ and that 
we can define open balls $B^1$, $B^2\sub \C$ such that $\{B(0,R),B^1,B^2,B(n,R)\}$ are pairwise disjoint and such that $\lfloor R\rfloor +1\in B^1$, and $\lfloor R\rfloor +2,\ldots, n-\lfloor R \rfloor-1 \in B^2$, where $\lfloor R \rfloor$ denotes the integer part of $R$.
See \textbf{Fig. \ref{fig1}}.
\begin{figure}
\centering
 \begin{tikzpicture}
\draw  (2,0) circle [radius=1];
\draw  (4,0) circle [radius=0.5];
% \draw  (8.25,0) circle [radius=3.5];
\draw  (13,0) circle [radius=1];
\draw  (8.25+0.25*7*1.4142,-0.25*7*1.4142) arc [radius=3.5, start angle=-45, end angle= 45];
\draw  (8.25-0.25*7*1.4142,0.25*7*1.4142) arc [radius=3.5, start angle=135, end angle= 225];
\draw  (0,0) --(6.5,0);
\draw [dashed] (6.5,0) -- (10,0);
\draw  (10,0) --(15,0);
\node [below,scale=0.7] at (2,0) {0};
\node [below,scale=0.7] at (2,-1) {$B(0,R)$};
\node [below,scale=0.7] at (3.1,0) {$R$};
\node [below,scale=0.7] at (4,0){$\lfloor R \rfloor +1$};
\node [below,scale=0.7] at (4,-0.5) {$B^1$};
\node [below,scale=0.7] at (5.2,0){$\lfloor R \rfloor +2$};
\node [below,scale=0.7] at (13,0) {$n$};
\node [below,scale=0.7] at (13,-1) {$B(n,R)$};
\node [below,scale=0.7] at (12.2,0) {$n-R$};
\node [below,scale=0.7] at (11,0) {$n-\lfloor R \rfloor-1$};
\node [below,scale=1] at (8.25,-2) {$B^2$};
\draw [fill] (2,0) circle [radius=0.025];
\draw [fill] (3,0) circle [radius=0.025];
\draw [fill] (13,0) circle[radius=0.025];
\draw [fill] (12,0) circle[radius=0.025];
\draw [fill] (11,0) circle[radius=0.025];
\draw [fill] (10,0) circle[radius=0.025];
\draw [fill] (4,0) circle [radius=0.025];
\draw [fill] (5,0) circle [radius=0.025];
\draw [fill] (6,0) circle [radius=0.025];
\end{tikzpicture}
\caption{The open sets $B(0,R)$, $B^1$, $B^2$ and $B(n,R)$.}
\label{fig1}
\end{figure}
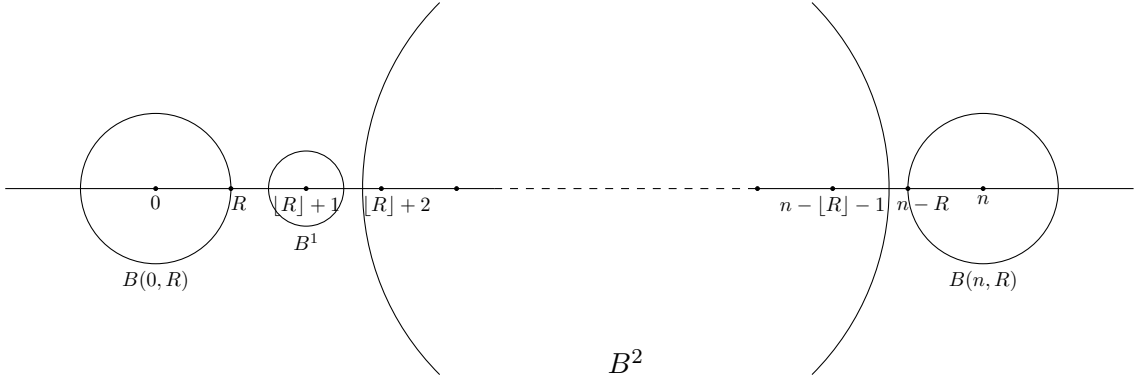

Define $\tilde g(z)= g(z-n)$ and $\alpha$ any $2^{n-\lfloor R \rfloor-2}$th-root of the number
$$f(0)^{2^{n-1}}\cdot\ldots \cdot f(\lfloor R \rfloor)^{2^{n-\lfloor R \rfloor-1}}\cdot 1^{2^{n-\lfloor R \rfloor-3}}\cdot\ldots \cdot1^{2^{\lfloor R \rfloor}}\cdot \tilde g(n-\lfloor R\rfloor)^{2^{\lfloor R \rfloor-1}}\cdot\ldots\cdot\tilde g(n-1).$$

Also consider the perturbed open sets in $\H$, 
\begin{align*}
 U_k&=\left\{h\in \H: \|f-h\|_{B(0,R)}<\f{\epsilon}{k}\right\},\\
 \tilde V_k&=\left\{h\in\H:\|\tilde g-h\|_{B(n,R)}<\f{\epsilon}{k}\right\},\\
 W^1_k&=\left\{h\in\H:\sup_{z\in B^1}\left|h(z)-\f{1}{\alpha}\right|<\f{1}{k}\right\} \text{ and }\\
W^2_k&=\left\{h\in\H:\sup_{z\in B^2}|h(z)-1|<\f{1}{k}\right\}.
 \end{align*}
 By Runge's Theorem we can find,  for each $k$, a polynomial $p_k$ in $U_k\cap W^1_k\cap W^2_k\cap \tilde V_k$.

 Observe that for $j\in \mathbb N$, we have 
\begin{align*}
 p_k(j)\rightarrow \begin{cases}
                    f(j)&\text{ if } j\leq \lfloor R\rfloor;\\
                    \f{1}{\alpha}& \text{ if } j=\lfloor R \rfloor +1;\\
                    1                   & \text{ if } \lfloor R \rfloor +1<j\leq n-\lfloor R\rfloor-1;\\
                    \tilde g(j)&\text{ if } n-\lfloor R\rfloor-1<j\leq n-1,
                   \end{cases}
\end{align*}
as $k\to \infty$.
Also, by definition of $\alpha$, $c_n(p_k)\rightarrow 1$ as $k\to \infty$. Thus, for large $k$ we have
$$\|c_n(p_k)p_k-\tilde g\|_{B(n,R)}\leq \f{\epsilon}{2} + |c_n(p_k)-1|\|p_k\|_{B(n,R)}\leq \f{\epsilon}{2} + |c_n(p_k)-1|\l \f{\epsilon}{2}  + \|\tilde g\|_{B(n,R)}\r< \epsilon.$$

Therefore, we can find a polynomial $p_k$ with
$$ 
 \|f-p_k\|_{B(0,R)}<\epsilon \quad\text{ and }\quad \|g-P^n(p_k)\|_{B(0,R)}=\|\tilde g-c_n(p_k)p_k\|_{B(n,R)}<\epsilon.$$
This proves that $P$ is mixing.

\subsection*{Proof that $P$ is chaotic}
Observe that a periodic vector for $P$ is a quasiperiodic function, that is, there exist $\alpha\in\C$ and $n\in\zN$ such that $f(z+n)=\alpha f(z)$. If this happens, then the homogeneity of $P$ forces
\begin{equation}\label{periodic2}
\l\f{1}{c_n(f)\alpha}\r^\f{1}{2^n-1}f
\end{equation}
 to be an $n$-periodic vector for $P$. Note also that if $f$ is a periodic vector for $P$, then $\lambda f$ is not necessarily a periodic vector for $P$.

 It is known that the set of periodic functions is dense in $\H$. To prove that $P$ is chaotic we will show that the set of periodic functions satisfying that $\l\f{1}{c_n(f)}\r^\f{1}{2^n-1}\sim 1$ is also dense in $\H$. So, it will be useful to have a good characterization of
the periodic functions. Define an infinite segment $L$, beginning at zero, so that $L\cap\zT$ is not a root of the unity and $\theta=\f{arg(L)}{2\pi}\in (\f 14,\f{3}{4})$, and define $\Omega=\C-L$ (so that
a branch of the logarithm may be defined on $\Omega$). Then, since $e^{\f{2\pi i}{n}z}$ maps any band  $(n(\theta+k),n(1+\theta+k))\times i\zR$ to $\Omega$,  we have the following.
\begin{lemma}\label{periodic}
 If an entire function $f$ is $n$-periodic then there exist $g$ holomorphic on $\Omega$ such that
 $$f(z)=g(e^{\f{2\pi i}{n}z})$$
 for all $z$ belonging to any band of the form $(n(\theta+k),n(1+\theta+k))\times i\zR$. Reciprocally, if $f(z)=g\l e^{\f{2\pi i}{n}z}\r$ for all $z\in\C$, then
 $f$ is $n$-periodic.
\end{lemma}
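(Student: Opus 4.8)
The plan is to obtain $g$ as the composition of $f$ with a holomorphic branch of the inverse of the map $E(z):=e^{\frac{2\pi i}{n}z}$; the consistency of the resulting formula across the different bands is exactly where the periodicity of $f$ gets used, and everything else is bookkeeping.

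I would start by recording two elementary facts about $E$: it satisfies $E(z+n)=E(z)$, and $E(z)=E(w)$ holds if and only if $z-w\in n\zZ$ (since $e^{\zeta}=1$ precisely for $\zeta\in 2\pi i\zZ$). Fix $k\in\zZ$ and write $S_k:=\bigl(n(\theta+k),n(1+\theta+k)\bigr)\times i\zR$. Because $S_k$ has width $n$ in the real direction while $E$ has period $n$, the restriction $E|_{S_k}$ is injective; together with the fact recalled just before the lemma — that $E$ maps $S_k$ onto $\Omega=\C\setminus L$ — this shows that $E|_{S_k}\colon S_k\to\Omega$ is a biholomorphism. Let $\psi\colon\Omega\to S_0$ denote the holomorphic inverse of $E|_{S_0}$.

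For the first implication, suppose $f$ is $n$-periodic and put $g:=f\circ\psi$; this is holomorphic on $\Omega$, being a composition of holomorphic maps. Let $k\in\zZ$ and $z\in S_k$. Then $w:=E(z)\in\Omega$, and $\psi(w)$ is by definition the unique point of $S_0$ with $E(\psi(w))=w=E(z)$, so $\psi(w)-z\in n\zZ$; the $n$-periodicity of $f$ then gives $f(\psi(w))=f(z)$. Hence $g(E(z))=f(\psi(E(z)))=f(z)$, which is the asserted identity on every band of the stated form. The converse requires nothing beyond $E(z+n)=E(z)$: if $f(z)=g(E(z))$ for all $z\in\C$, then $f(z+n)=g(E(z+n))=g(E(z))=f(z)$, so $f$ is $n$-periodic.

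None of the steps is a genuine obstacle. The only point deserving care is that one fixed branch $\psi$ defined on all of $\Omega$ — rather than a branch varying with $k$ — already reproduces $f$ on every band, and this is precisely the content of the periodicity hypothesis; the rôle of the hypotheses on $L$ and $\the\theta$ is only to guarantee the stated mapping property of $E$ onto $\Omega$, which we are allowed to take for granted here.
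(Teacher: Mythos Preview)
Your argument is correct and is exactly the natural elaboration of what the paper intends: the paper does not give a proof of this lemma at all, merely stating it as an immediate consequence of the fact (recorded in the sentence preceding the lemma) that $e^{\frac{2\pi i}{n}z}$ maps each band $(n(\theta+k),n(1+\theta+k))\times i\zR$ onto $\Omega$. Your construction of $g=f\circ\psi$ via the holomorphic inverse $\psi$ of $E|_{S_0}$, together with the observation that periodicity of $f$ makes the same $g$ work on every band, is precisely the standard justification the authors are taking for granted.
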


We now begin with our proof of the chaoticity of $P$.

Let $U=\{h\in\H:\|h-g\|_{B(0,R)}<\epsilon\}$ be a  nonempty open set of $\H$ with  $R\not \in \zN.$ Our goal is to find, for some $n\in\mathbb N$, an $n$-periodic function $f\in U$ so that also
$c_n(f)^\f{-1}{2^n-1}f\in U$. By \eqref{periodic2}, this implies that $c_n(f)^\f{-1}{2^n-1}f$ is a periodic vector for $P$ and therefore, the set of periodic vectors is dense in $\H$.

Take $n_0\in\mathbb N$ so that $n_0>4R$. 
Since the periodic functions with period greater than $n_0$ are dense in $\H$ \cite[Sublemma 7]{AroMar04} , there exists, for some $n>n_0$, an $n$-periodic function $f$ with $\|f-g\|_{B(0,R)}<\f{\epsilon}{2}$. We may also suppose that $f(j)\ne 0$ for every $j\in \mathbb Z$.

Now take $k\in \zZ$ such that $B(0,R)$ is contained in the band $(\theta+k,n+\theta+k)\times i\zR$. Thus, by the previous Lemma, $f(z)=h(e^\f{2\pi iz}{n})$ for every $z\in B(0,R)$ for an appropriate holomorphic function $h$ on $\Omega$.  
 Instead of applying Runge's Theorem to
the  function $f$ we will apply it to $h$. The function $e^\f{2\pi iz}{n}$ maps $\zN_0$ to $G_n$, the $n$-th roots of the unity, which we will denote $\omega_0,\ldots,\omega_{n-1}$. Thus, $h(\omega_j)\neq 0$ for every $\omega_j\in G_n$ and
$$P^n(h\circ e^\f{2\pi iz}{n})=\tilde c_n(h)\cdot h\circ e^\f{2\pi iz}{n},$$
where $\tilde c_n(h):= h(\omega_0)^{2^{n-1}}\ldots h(\omega_{n-1})=c_n(h\circ e^\f{2\pi iz}{n})$. Consider  $B_1=\{e^{\f{2\pi i z}{n}}:\ z\in B(0,R)\}$ and observe that 
$\omega_0,\omega_1,\ldots,\omega_{\lfloor R \rfloor}$, and $\omega_{n-\lfloor R \rfloor},\ldots\omega_{n-1}$ are all in $B$ while
$\omega_{\lfloor R \rfloor+1},\ldots,\omega_{n-\lfloor R \rfloor-1}$ are in $(\overline {B_1}^c)$. Also, since $n>4R$ and $\theta\in (\f 14,\f34)$, $B_1\sub \Omega$ and
$h$ is holomorphic on $B_1$. Runge's Theorem allows us to find $\tilde h$ such that $\tilde h$ is close to $h$ on $B_1$
and at the same time $\tilde c_n (\tilde h)$ is close to  $1$. Indeed, choose $B_2$ and $B_3$ open sets so that
$\omega_{\lfloor R \rfloor+1}\in B_2$, $\omega_{\lfloor R \rfloor+2},\ldots,\omega_{n-\lfloor R \rfloor-1}$ are in $B_3$ and $B_1,B_2,B_3$ are pairwise disjoint.
See \textbf{Fig. \ref{fig2}}.

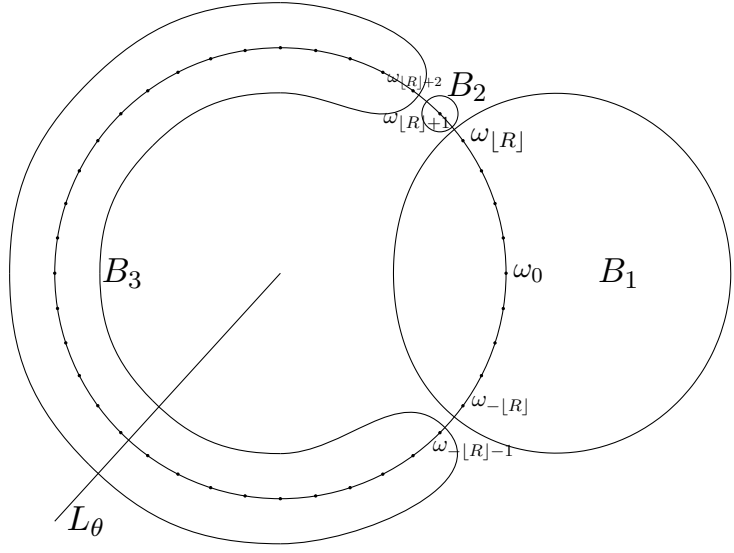
\begin{figure}
 \centering
\begin{tikzpicture}[scale=3]
%%%%%%%%%%%%%%%%%%%%%%%%%%%%%%%%%%%%n=100,R=20%%%%%%%%%%%%%%%%%%%%%%%%%%%%%%%%%%%%%%%%%%%%
%n=40,R=10.5
\draw  (0,0) circle [radius=1];

%RAICES DE LA UNIDAD
\foreach \x in {0,1,2,...,39}
  \draw  [fill] ({cos (0.05*pi*\x r)},{sin (0.05*pi*\x r)}) circle [radius=0.005];

%BOLA ALREDEDOR DE LA PRIMER RAIZ  
  \draw  ({cos (0.05*pi*5 r)},{sin (0.05*pi*5 r)}) circle [radius=0.08];

%CHORIZO 2%%%%%%%%%%%%%%%%%%%%%%%%%%%%%%%%%%%%%%%%
  \coordinate (1)  at ({cos (0.05*pi*5.8 r)},{sin (0.05*pi*5.8 r)}) {};
   \coordinate (2)  at ({1.2*cos (0.05*pi*15 r)},{1.2*sin (0.05*pi*15 r)}) {};
   \coordinate (3)  at ({1.2*cos (0.05*pi*20 r)},{1.2*sin (0.05*pi*20 r)}) {}; 
  \coordinate (4)  at ({1.2*cos (0.05*pi*25 r)},{1.2*sin (0.05*pi*25 r)}) {};
  \coordinate (5)  at ({1.2*cos (0.05*pi*30 r)},{1.2*sin (0.05*pi*30 r)}) {};
  \coordinate (6)  at ({cos (0.05*pi*-4.8 r)},{sin (0.05*pi*-4.8 r)}) {};
  \coordinate (7)  at ({0.8*cos (0.05*pi*30 r)},{0.8*sin (0.05*pi*30 r)}) {};
  \coordinate (8)  at ({0.8*cos (0.05*pi*25 r)},{0.8*sin (0.05*pi*25 r)}) {};  
  \coordinate (9)  at ({0.8*cos (0.05*pi*20 r)},{0.8*sin (0.05*pi*20 r)}) {};
\coordinate (10)  at ({0.8*cos (0.05*pi*15 r)},{0.8*sin (0.05*pi*15 r)}) {};
\coordinate (11)  at ({0.8*cos (0.05*pi*10 r)},{0.8*sin (0.05*pi*10 r)}) {};
\coordinate (12)  at ({1.2*cos (0.05*pi*10 r)},{1.2*sin (0.05*pi*10 r)}) {};

\draw (1) [-] to  [out=54,in=0] (12);
\draw (12) [-] to  [out=180,in=45] (2);
\draw (2) [-] to  [out=225,in=90] (3);
\draw (3) [-] to  [out=270,in=135] (4);
\draw (4) [-] to  [out=315,in=180] (5);
\draw (5) [-] to  [out=0,in=315] (6);
\draw (6) [-] to  [out=135,in=0] (7);
\draw (7) [-] to  [out=180,in=315] (8);
\draw (8) [-] to  [out=135,in=270] (9);
\draw (9) [-] to  [out=90,in=225] (10);
\draw (10) [-] to  [out=45,in=180] (11);
\draw (11) [-] to  [out=0,in=234] (1);

%%%%%%%%%%%%%%%%%%%%%%%%%%%%%%%%%%%%%%%%%%%%%%%%%%%%%%%
%INTENTO DE CHORIZO 1, N=40, R=4.5
\draw[smooth, domain=0:2*pi,samples=100] plot ({exp(-0.11*2*pi*sin(\x r))*cos(0.11*2*pi*cos(\x r) r)},{exp(-0.11*2*pi*sin(\x r))*(sin(0.11*2*pi*cos(\x r) r))});

%%%%%%%%%%%%%%%%%%%%%%%%%%%%%%%%%%%%%%%%%%%%%%%%%%%%%%
%RECTA  L_theta
 
\draw (0,0) -- (-1,-1.1); 
 
%INDICE%%%%%%%%%%%%%%%%
%%%%%%%%%%%RAICES DE LA UNIDAD
\node at ({cos (0.0 r)+0.1},{sin (0.0 r)}) {$\omega_0$};
\node  at ({cos (0.05*pi*4 r)+0.16},{sin (0.05*pi*4 r)}) {$\omega_{\lfloor R\rfloor}$};
\node [scale=0.8] at ({cos (0.05*pi*5 r)-0.1},{sin (0.05*pi*5 r)-0.04}) {$\omega_{\lfloor R\rfloor+1}$};
\node [scale=0.65] at ({cos (0.05*pi*6 r)+0.01},{sin (0.05*pi*6 r)+0.04}) {$\omega_{\lfloor R\rfloor+2}$};
\node [scale=0.8] at ({cos (0.05*pi*-4 r)+0.17},{sin (0.05*pi*-4 r)}) {$\omega_{-\lfloor R\rfloor}$};
\node [scale=0.8] at ({cos (0.05*pi*-5 r)+0.15},{sin (0.05*pi*-5 r)-0.08}) {$\omega_{-\lfloor R\rfloor-1}$};

%%%%%%%%%%%%%%%%%%%%%%%%
%RECTA L_\theta
\node [right,scale=1.2] at (-1,-1.1) {$L_\theta$};
%%%%%%%%%%%%%%%%%%%%%%%
%CONJUNTOS
\node [scale=1.2] at (1.5,0) {$B_1$};
\node [scale=1.2] at ({cos (0.05*pi*5 r)+0.12},{sin (0.05*pi*5 r)+0.12}) {$B_2$};
\node [scale=1.2] at (-0.7,0) {$B_3$};

\end{tikzpicture}
\caption{
 Shape and location of the sets $B_1$, $B_2$, $B_3$, $L_\theta$ and $G_n$.}
\label{fig2}
\end{figure}
Now define $U_1^l,U_2^l,U_3^l$ open sets in $\H$ as
\begin{align*}
  U_1^l&= \left\{g\in\H: \|g-h\|_{B_1}<\f{\epsilon}{l}\right\},\\
  U_2^l&= \left\{g\in\H: \sup_{z\in B_2}\left|g(z)-\frac1{\alpha}\right|<\f{\epsilon}{l}\right\},\\
  U_3^l&= \left\{g\in\H: \sup_{z\in B_3}\left|g(z)-1\right|<\f{\epsilon}{l} \right\};
\end{align*}
where $\alpha$ is any $2^{n-\lfloor R\rfloor-2}$ th-root of the number
$$h(\omega_0)^{2^{n-1}}\cdot\ldots\cdot h(\omega_{\lfloor R \rfloor})^{2^{n-\lfloor R\rfloor-1}}\cdot 1^{2^{n-\lfloor R\rfloor-3}}\cdot\ldots \cdot1^{2^{\lfloor R\rfloor}}\cdot h(\omega_{n-\lfloor R \rfloor})^{2^{\lfloor R\rfloor-1}}\cdot\ldots \cdot h(\omega_n).$$ 

 By Runge's Theorem we can find, for every $l$, a polynomial $h_l\in U_1^l\cap U_2^l\cap U_3^l$. By the choice of $h_l$ and $\alpha$, $\tilde c_n(h^l)\rightarrow 1$ and $\|h_l-h\|_{B_1}\rightarrow 0$ as $l$ tends to infinity.
Thus, $$\|c_n(h_l\circ e^{\f{2\pi iz}{n}})^\f{-1}{2^n-1}h_l\circ e^{\f{2\pi iz}{n}}-f\|_{B(0,R)} = \|\tilde c_n(h_l)^\f{-1}{2^n-1}h_l\circ e^{\f{2\pi iz}{n}}-h\circ e^{\f{2\pi iz}{n}}\|_{B(0,R)}\rightarrow 0.$$
Therefore, for large enough $l$, $c_n(h_l\circ e^{\f{2\pi iz}{n}})^\f{-1}{2^n-1}h_l\circ e^{\f{2\pi iz}{n}}\in U$. Finally by  Lemma \ref{periodic}, $h_l\circ e^{\f{2\pi iz}{n}}$ is $n$-periodic and by \eqref{periodic2} $c_n(h_l\circ e^{\f{2\pi iz}{n}})^\f{-1}{2^n-1}h_l\circ e^{\f{2\pi iz}{n}}$
is a periodic vector for $P$.

\subsection*{Proof that $P$ is frequently hypercyclic}
To prove the existence of frequently hypercyclic vectors we will use the following result \cite[Lemma 2.5]{BonGro07}.

\begin{lemma}
 There exist pairwise disjoint subsets $A_{n,m}$ of $\zN$, each having positive lower density such that for any $k\in A_{n,m}$, $k'\in A_{n',m'}$ we have
 $k>m$, and $|k-k'|>m+m'$ if $k\neq k'$.
\end{lemma}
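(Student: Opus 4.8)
Since this is a purely combinatorial statement (it is Lemma~2.5 of \cite{BonGro07}), the plan is to build the sets $A_{n,m}$ by hand. The first thing I would note is \emph{why} a naive choice fails: if each $A_{n,m}$ were an arithmetic progression with some common difference $d_{n,m}$, then a set $A_{n',m'}$ with $m'$ very large would need, around each of its points, an interval of radius $m'$ disjoint from every other $A_{n,m}$; since $m'$ is unbounded, this forces \emph{every} $A_{n,m}$ to contain arbitrarily long gaps, even though each is still required to have positive lower density. The way to reconcile these two demands is a blockwise construction. Partition $\zN$ into consecutive blocks $J_1,J_2,\dots$ of lengths $\lambda_p:=|J_p|$ growing polynomially (e.g.\ $\lambda_p=p^4$ for $p$ large), and agree to place points of $A_{n,m}$ inside $J_p$ only for the finitely many pairs with $\max(n,m)\le p$. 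Then a fixed $A_{n,m}$ receives points in all blocks $J_p$ with $p\ge\max(n,m)$ but is empty on the first $\max(n,m)-1$ blocks, which produces exactly the long gaps forced above.

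Inside a block $J_p$, for each admissible pair $(n,m)$ (that is, $\max(n,m)\le p$) I would reserve a subinterval $J_p^{(n,m)}\subset J_p$ of length a definite fraction of $2^{-(n+m)}\lambda_p$, lay these $\le p^2$ subintervals out consecutively, and separate consecutive ones — as well as the two ends of $J_p$ — by \emph{buffer} gaps of width $2p+2$ containing no points; since $\sum_{n,m\ge1}2^{-(n+m)}=1$ and the $O(p^2)$ buffers have total length $O(p^3)$, everything fits inside $J_p$ once $\lambda_p$ dominates a fixed polynomial in $p$. Inside $J_p^{(n,m)}$ put an arithmetic progression of common difference $2m+1$, and let $A_{n,m}$ be the union of these progressions over $p\ge\max(n,m)$. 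Now take distinct points $k\in A_{n,m}$, $k'\in A_{n',m'}$, with $k$ in block $J_p$ and $k'$ in block $J_{p'}$, $p\le p'$. If $p=p'$ and $k,k'$ lie in the same subinterval, then $(n,m)=(n',m')$ and $|k-k'|\ge 2m+1>2m=m+m'$; if $p=p'$ but the subintervals differ, they are separated by a buffer, so $|k-k'|\ge 2p+2>m+m'$ since $m,m'\le p$. If $p'=p+1$, then $k,k'$ are separated at least by the end-buffer of $J_p$, so $|k-k'|\ge 2p+2>2p+1\ge m+m'$ since $m\le p$ and $m'\le p+1$. Finally if $p'\ge p+2$, then $|k-k'|\ge\lambda_{p'-1}=(p'-1)^4>2(p'-1)\ge m+m'$ since $m\le p\le p'-2$ and $m'\le p'$. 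So in every case $|k-k'|>m+m'$, and the $A_{n,m}$ are pairwise disjoint; discarding from each $A_{n,m}$ its finitely many points $\le m$ yields the condition $k>m$ while leaving the lower density unchanged.

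It then remains to check that each $A_{n,m}$ has positive lower density, i.e.\ $\liminf_N \#\{x\in A_{n,m}:x\le N\}/N>0$. In the initial segment ending with block $J_P$ the number of points of $A_{n,m}$ is $\gtrsim\sum_{p=\max(n,m)}^{P-1}2^{-(n+m)}\lambda_p/(2m+1)$, while the length of that segment is $\sum_{p=1}^{P}\lambda_p$; since $\lambda_p=p^4$ grows polynomially, the tail ratio $\sum_{p=\max(n,m)}^{P}\lambda_p / \sum_{p=1}^{P}\lambda_p$ tends to $1$ as $P\to\infty$, so this $\liminf$ is bounded below by a positive constant of order $2^{-(n+m)}/(2m+1)$. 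I expect this last balancing act to be the only real point of care in the argument: $\lambda_p$ must grow fast enough that the buffers, whose total length grows with $p$, become negligible inside $J_p$, yet slowly enough (polynomial growth is plenty) that the finitely many blocks $J_p$ with $p<\max(n,m)$ in which $A_{n,m}$ is absent do not destroy its lower density. Everything else — the disjointness and the $|k-k'|>m+m'$ and $k>m$ bounds — is then immediate from the layout.
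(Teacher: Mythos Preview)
Your construction is sound, but note that the paper does \emph{not} prove this lemma at all: it is quoted verbatim as \cite[Lemma~2.5]{BonGro07} and used as a black box in the frequent-hypercyclicity argument. So there is no ``paper's own proof'' to compare against here.

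That said, your blockwise construction is a correct way to establish the result. The key balancing you identify --- polynomial block growth $\lambda_p=p^4$ fast enough to absorb the $O(p^3)$ total buffer length, yet slow enough that $\lambda_P/\sum_{p\le P}\lambda_p\to 0$ so the initial absent blocks do not kill the lower density --- is exactly the point. Your case analysis for $|k-k'|>m+m'$ checks out (in the case $p'\ge p+2$ you use $(p'-1)^4>2(p'-1)$, which holds since $p'\ge 3$). One small remark: with your layout the condition $k>m$ is in fact automatic, since the first point of $A_{n,m}$ lies in block $J_{\max(n,m)}$ and sits past an initial buffer of width $2\max(n,m)+2$; but your ``discard finitely many'' safety net is harmless either way.
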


We will now prove that $P$ supports a frequently hypercyclic vector. Our proof follows Example 9.6 in \cite{GroPer11} together with a careful  use of Runge's theorem.

 Let $A_{n,m}$ be the subsets given by the above lemma and consider $(k_j)_j\sub \zN$ the increasing sequence formed by
 $\bigcup A_{n,m}$. If $k_j\in A_{n,m}$ we define $B_j=B(k_j,r_j)$, where $r_j=\f{m}{2}+\f{1}{m}$ is a non natural radius. It follows from the above lemma that the $B_j$ are pairwise disjoint.
 Let $(p_n)_n$ be a dense sequence in $\H$ such that $p_n(l)\ne 0$ for every $l\in\mathbb Z$, $n\in\mathbb N$.

Applying Runge's Theorem recursively we will find $(f_j)_j\sub\H$ such that $f_j$ approximates $\tilde p_j(z):=p_n(z-k_j)$ on ${B_j}$, where $n$ is the only natural number such that $k_j\in A_{n,m}$,  such that $c_{k_j}(f)$ is close to $1$, and such that $f_j(l)\ne 0$ for every $l\in\mathbb Z$.
To achieve this, let $(\epsilon_j)_j\in \ell_1$ be a sequence of positive numbers such that   $\epsilon_j< \f{1}{m}$ whenever $k_j\in A_{n,m}$. 
We will define inductively a sequence of entire functions $(f_j)_j\subset\H$ and a sequence of positive numbers $(\delta_j)_j$ satisfying
\begin{itemize}
 \item[(a)] $\|f_{j+1}-f_j\|_{B(0,k_{j}+\f{1}{k_{j}})}<\delta_{j}$,
 \item[(b)] $\|c_{k_{j+1}}(f_{j+1})f_{j+1}-\tilde p_{j+1}\|_{B_{j+1}}<\delta_{j}$,
 \item[(c)] $\delta_{j+1}< \min\{\epsilon_{j+1},\epsilon_{j+2}/2,\gamma_{j+1}\}$,
 \item[(d)] $\delta_{j+1}<\gamma_{l_0}-\sum_{l=l_0}^{j}\delta_l$, for $l_0=1,\dots,j$ and
  \item[(e)] $f_{j+1}$ has no zero in $\mathbb Z$,
\end{itemize}
where $\gamma_j$ is a positive number that depends on $f_j$ as follows. For any $g\in\H$ and $j\in\mathbb N$, let
  $\Phi_{j}:\C^{k_j}\rightarrow \C$ defined as
 $$\Phi_{j}\l x_0,\ldots,x_{k_j-1}\r:=x_0^{2^{k_j-1}}\cdot\ldots \cdot x_{k_j-1}.$$ Thus, if we set
 $$K_{g,j}=\sup_{|z|<k_{j+1}} |g(z)|,$$
 we have that $\Phi_j$ is uniformly continuous on the product of the closed discs $\Pi_{l=1}^{k_j}\overline {B(0,K_{g,j}+\|\epsilon\|_1)}\subset \C^{k_j}$ and
 $$c_{k_j}(g)=\Phi_j(x_{g,j}),$$ 
 where $x_{g,j}$ is the vector $\l g(0),\ldots, g(k_j-1)\r$. 
 Since  $\Phi_j$ is uniformly continuous, given the number $\f{\epsilon_j}{2(K_{g,j}+\|\epsilon \|_1)}>0$ there exists $\gamma_{g,j}>0$ such that for every $x,y\in\overline {B(0,K_{g,j}+\|\epsilon\|_1)}\times\dots\times\overline {B(0,K_{g,j}+\|\epsilon\|_1)}$ we have that 
 \begin{equation}\label{def gamma}
  \text{ if } \|x-y\|_\infty<\gamma_{g,j}  \text{ then } |\Phi_j(x)-\Phi_j(y)|<\f{\epsilon_j}{2(K_{g,j}+\|\epsilon \|_1)}.
 \end{equation}
 Once fixed the function $f_j$, $\gamma_j$ will be defined as $\gamma_j:=\gamma_{f_j,j}$.

We start setting $f_1(z)=\tilde p_1(z)$ (thus we have defined $\gamma_1:=\gamma_{f_1,1}$). We define $\delta_1>0$ such that
 $$
 \delta_1<\min\{\epsilon_1,\epsilon_2/2,\gamma_1\}.
 $$

Suppose now that $f_1,\ldots f_j\in\H$ and $\delta_1,\ldots,\delta_j\in\zR_{>0}$ have been constructed and satisfy (a)-(e). We will now define $f_{j+1}$ and $\delta_{j+1}$.

 Consider $B^1_{j+1}$ and $B^2_{j+1}$ disjoint open sets so that
 $k_{j}+1 \in B^1_{j+1}$, $\{k_{j}+2,\ldots,k_{j+1}-\lfloor r_{j+1} \rfloor-1\} \sub B^2_{j+1}$,  and such that $\{|z|<k_{j}+\frac1{k_j}\},B^1_{j+1},B^2_{j+1},B_{j+1}$ are all disjoint.
 See \textbf{Fig.} \textbf{\ref{fig3}}.
\begin{figure}
 \centering
 \begin{tikzpicture}[scale=1] 
 \draw (0,0) -- (6.5,0);
  \draw [dashed](6.5,0) -- (9.5,0);
  \draw (9.5,0) -- (11.5,0);
  \draw [dashed] (11.5,0) -- (13,0);
   \draw  (13,0) -- (15,0);

  \draw [fill](4.1,0) circle [radius=0.025];
  \foreach \x in {5,...,11}
  \draw [fill](\x,0) circle [radius=0.025];
%   \node [below,scale=0.7] at (\x,0) {$k_j+\f{1}{k_j}$};
\draw [fill](14,0) circle [radius=0.025];
  \node (3)[scale=0.7] at (4.1,0) {};
  \node [below,scale=0.7] at (4,0) {$k_j+\f{1}{k_j}$};
  \node [below,scale=0.7] at (5,0) {$k_j+1$};
  \node [below,scale=0.7] at (6,0) {$k_j+2$};
  \node [below,scale=0.7] at (11.1,0) {$k_{j+1}-\lfloor r_{j+1} \rfloor$};
  \node [scale=0.7] at (10,0.2) {$k_{j+1}-\lfloor r_{j+1} \rfloor-1$};
  \node [scale=0.7] at (14,-0.2) {$k_{j+1}$};
  
  \node [scale=0.9] at (2.5,1) {$B(0,k_j+\frac{1}{k_j})$};
  \node at (12,0.7) {$B_{j+1}$};
  \node at (8,0.7) {$B^2_{j+1}$};
  \node at (5,0.7) {$B^1_{j+1}$};

%   \node (1) at (0,-3.1){1};
  \draw  (3) +(0:0) arc (0:30:3.1cm);
  \draw  (3) +(0:0) arc (360:330:3.1cm);
  
  \draw (5,0) circle [radius=0.40];
  \node (4)  at (5.5,-0.11) {};
  \node (5) at (10.2,-0.11){};
  \node (6)  at (5.5,0.11) {};
   \node (7)  at (10.2,0.11) {};

\draw (4) [-] to  [out=90,in=90] (5);
\draw (6) [-] to  [out=-90,in=-90] (7);
    
 \node (8)  at (10.8,0) {};
%  \draw (8) +(0:0) arc (300:360:2.2cm);
%  \draw (8) +(0:0) arc (45:90:2.2cm);
%  \draw (8) +(0:0) arc (90:45:2.2cm);
 \draw (8) +(0:0) arc (180:210:3.2cm);
 \draw (8) +(0:0) arc (-180:-210:3.2cm);

 \end{tikzpicture}
\caption{ The open sets $B(0,k_j+\frac{1}{k_j})$, $B^1_{j+1}$,  $B^2_{j+1}$ and $B_{j+1}$.}
\label{fig3}
\end{figure}
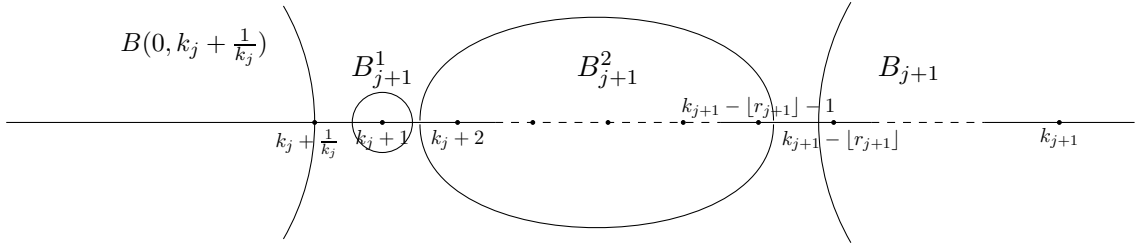

 Now by Runge's Theorem we can find auxiliary entire functions $g_l$ satisfying
 \begin{itemize}
  \item[(i)] $\|g_l-f_j\|_{B(0,k_{j}+\f{1}{k_{j}})}<\f{\epsilon_{j}}{l}$,
  \item[(ii)] $\sup_{z\in B^1_{j+1}} |g_l(z)-d_j|<\f{\epsilon_j}{l}$,
  \item[(iii)] $\sup_{z\in B^2_{j+1}} |g_l(z)-1|<\f{\epsilon_j}{l}$,
  \item[(iv)]  $\|g_l-\tilde p_{j+1}\|_{B_{j+1}} <\f{\epsilon_j}{l}$ and
  \item[(v)] $g_l$ has no zero in $\mathbb Z$;
 \end{itemize}
 where $1/d_j$ is a ($2^{k_{j+1}-k_j-2}$)-th root of the number 
 $$f_j(0)^{2^{k_{j+1}-1}}\cdot\ldots \cdot f_j(k_j)^{2^{k_{j+1}-k_j-1}}\cdot 1^{2^{k_{j+1}-k_j-3}}\cdot\ldots \cdot1^{2^{\lfloor r_{j+1}\rfloor}} \cdot \tilde p_{j+1}(k_{j+1}-\lfloor r_{j+1}\rfloor)^{2^{\lfloor r_{j+1}\rfloor-1}}\cdot\ldots\cdot  \tilde p_{j+1}(k_{j+1}-1),$$
 so that $c_{k_{j+1}}(g_l)$ approaches to $1$ as $l\to\infty$. Take now $l$ large enough so that $\frac{\epsilon_{j+1}}{l}<\delta_j$ and such that
\begin{align*}
\|c_{k_{j+1}}(g_l)g_l-\tilde p_{j+1}\|_{B_{j+1}} &\le |c_{k_{j+1}}(g_l)-1|\cdot\|g_l\|_{B_{j+1}}+\|g_l-\tilde p_{j+1}\|_{B_{j+1}} \\
 &  \le|c_{k_{j+1}}(g_l)-1|\cdot\l\|\tilde p_{j+1}\|_{B_{j+1}}+\f{\epsilon_j}{l}\r+\f{\epsilon_j}{l}<\delta_j.
\end{align*}
For such an $l$, we set $f_{j+1}:=g_{l}$, hence determining as above the number $\gamma_{j+1}>0$. Finally we set $\delta_{j+1}>0$ such that
$$
\delta_{j+1}<\min\left\{\epsilon_{j+1},\frac{\epsilon_{j+2}}{2},\gamma_{j+1},\gamma_{j}-\delta_j,\gamma_{j-1}-\delta_j-\delta_{j-1},\dots,\gamma_{1}-\sum_{l=1}^j\delta_l\right\}.
$$
This concludes the construction of $(f_j)_j$ and $(\delta_j)_j$ satisfying (a)-(e).
 
%  \textcolor{red}{los $l_j$ grandes para que $c_{k_j}(f)\sim 1$:
  Now we define $f$ as
 $$f:=f_1+\sum_{j=1}^\infty (f_{j+1}-f_j).$$
 Note that  (d) implies that 
$$\sum_{n\geq j} \delta_n\leq \gamma_j,$$ 
and in particular, the sequence $(\delta_j)_j\in \ell_1$. Thus, (a) and the fact that $k_j+\f{1}{k_{j}}\rightarrow \infty$ imply that $f$ is an entire function and that $f=\lim_{j\rightarrow \infty} f_j$. 
Moreover,
 $$c_{k_j}(f)=\Phi(x_{f,j}),$$
 where $x_{f,j}=\l f_j(0)+\sum_{n\geq j} (f_{n+1}(0)-f_n(0)),\ldots ,f_j(k_{j}-1)+\sum_{n\geq j} (f_{n+1}(k_{j}-1)-f_n(k_{j}-1))\r.$
Also, by (a) and (c), both $x_{f,j}$ 	and $x_{f_j,j}$ belong to $\overline {B(0,K_{f_j,j}+\|\epsilon\|_1)}\times\dots\times\overline {B(0,K_{f_j,j}+\|\epsilon\|_1)}$ and since
\begin{equation*}\label{cuenta final 2}
 \|x_{f,j}-x_{f_j,j}\|_{\infty}\leq \sup_{|z|<k_{j}-1} \left| \sum_{n\geq j} f_{n+1}(z)-f_n(z)\right|\leq  \sum_{n\geq j} \delta_{n} \le \gamma_j,	
\end{equation*}
 by \eqref{def gamma} we obtain 
\begin{equation}\label{cuenta final 3}
 | c_{k_{j}}(f)-c_{k_{j}}(f_j)|=  |\Phi_j (x_{f,j})-\Phi_j(x_{f_j,j})|\leq \f{\epsilon_j}{2(K_j+\|\epsilon \|_1)}.
\end{equation}
 Let $z\in B_j$, then using (a),(b),(c),\eqref{cuenta final 3},
\begin{align*}
 |c_{k_{j}}(f)f(z)- p_{n}(z-k_{j})|&\leq |c_{k_{j}}(f_j)f(z)- p_{n}(z-k_{j})|+|\l c_{k_{j}}(f)-c_{k_{j}}(f_j)\r f(z)|\\
 &\leq \delta_{j-1}+  \f{\epsilon_j}{2(K_j+\|\epsilon \|_1)}|f(z)|\\
 &\leq \f{\epsilon_j}{2}+\f{\epsilon_j}{2(K_j+\|\epsilon \|_1)}(|f_j(z)|+\sum_{n\ge j}|f_{n+1}(z)-f_n(z)|)\\
 &\leq  \f{\epsilon_j}{2}+\f{\epsilon_j}{2(K_j+\|\epsilon \|_1)}(K_j+\sum_{n\ge j}\epsilon_{n})\leq  {\epsilon_j}.
\end{align*}

 Therefore, for $k_j\in A_{n,m}$,
 $$\sup_{|z|<\frac{m}{2}+\frac1{m}} |P^{k_j}f(z)- p_{n}(z)|=\sup_{z\in B_j-k_j} |P^{k_j}f(z)- p_{n}(z)|= \sup_{z\in B_j} |c_{k_{j}}(f)f(z)- p_{n}(z-k_{j})| \le\epsilon_j<\f{1}{m}.$$
 Note that the sets
 $$U_{n,m}:=\left\{h\in \H: \sup_{|z|<\frac{m}{2}+\frac1{m}} |h(z)-p_n(z)|<\f{1}{m}\right\},$$
 with $n,m\in\mathbb N$, form a basis of open sets of $\H$. Finally since for $k\in A_{n,m}$, $P^k(f)\in U_{n,m}$   and each $A_{n,m}$ has positive lower density, we conclude that $f$ is a frequently hypercyclic vector for $P$.

 \section{Examples of non-hypercyclic polynomials on $\H$}
The purpose of this section is to show that many natural homogeneous polynomials on $\H$ fail to be hypercyclic. In view of what we have proved in the previous section, and the fact that translation and differentiation operators on $\H$ share many dynamical properties, a natural candidate to be hypercyclic is the homogeneous polynomial $P(f):= f(0)\cdot f'$. Another favorable motivation comes from the study of bilinear hypercyclic operators on $\H$. B{\`e}s and Conejero  considered in \cite[Section 4]{BesCon14} the bilinear operator $M(f,g)=f(0)g'$, and showed that it is hypercyclic (in the sense defined by the authors). Since $M(f,f)=P(f)$ it is reasonable to expect that $P$ is also hypercyclic.
Surprisingly, the polynomial fails to be hypercyclic. 
\begin{proposition}\label{no hiper}
 The homogeneous polynomial $P(f):=f(0)f'(z)$ is not hypercyclic.
\end{proposition}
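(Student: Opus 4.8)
The plan is to make the heuristic ``$P$ multiplies $f^{(n)}$ by a scalar that runs out of control'' fully quantitative, using Cauchy's estimates. First I would record the shape of the iterates: by an easy induction
\[
P^n(f)(z)=c_n(f)\,f^{(n)}(z),\qquad c_n(f)=\prod_{j=0}^{n-1}f^{(j)}(0)^{2^{\,n-1-j}},
\]
and more generally $c_{n+m}(f)=c_n(f)^{2^m}\prod_{l=0}^{m-1}f^{(n+l)}(0)^{2^{\,m-1-l}}$, a relation I will use with $m\to\infty$. If $f^{(j)}(0)=0$ for some $j\ge0$ then $c_{j+1}(f)=0$, so $P^k(f)\equiv0$ for every $k>j$ and $f$ cannot be hypercyclic; hence I may assume all $f^{(j)}(0)\neq0$. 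Set $\beta_j=\log|f^{(j)}(0)|$ and $\gamma_n=\log|c_n(f)|=\sum_{j=0}^{n-1}2^{\,n-1-j}\beta_j$, so that $\gamma_{n+m}=2^m\gamma_n+\sum_{l=0}^{m-1}2^{\,m-1-l}\beta_{n+l}$.

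The core estimate is that coming close to a bounded target on a large disc forces the scalars to collapse. Fix $R$ large (to be chosen) and suppose $\|P^{n_0}(f)-2\|_{B(0,R)}<\tfrac12$. Evaluating at $0$ gives $\beta_{n_0}+\gamma_{n_0}<\log\tfrac52$; Cauchy's inequalities on $\{|z|=R\}$ applied to $P^{n_0}(f)-2$ give, for every $k\ge1$,
\[
\bigl|c_{n_0}(f)\,f^{(n_0+k)}(0)\bigr|=\bigl|[P^{n_0}(f)]^{(k)}(0)\bigr|\le\frac{k!}{2R^{k}},
\]
i.e.\ $\beta_{n_0+k}\le\log k!-\log2-k\log R-\gamma_{n_0}$. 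Substituting these bounds into the formula for $\gamma_{n_0+m}$, the $\gamma_{n_0}$-terms telescope to a single $\gamma_{n_0}$, and (using $c_0:=\sum_{l\ge1}2^{-l}\log l!<\infty$ and $\sum_{l=1}^{m-1}l\,2^{\,m-1-l}=2^m-m-1$) a short computation gives
\[
\gamma_{n_0+m}\ \le\ \gamma_{n_0}+2^{\,m-1}\bigl(\log\tfrac54+c_0-2\log R\bigr)+(m+1)\log R+\log2 .
\]
Choosing $R$ so that $2\log R>\log\tfrac54+c_0$ makes the bracket a negative constant $-\delta$, so $\gamma_{n_0+m}\to-\infty$, that is $c_n(f)\to0$. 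I expect this step — arranging the Cauchy bounds so that the doubly-exponential weight $2^{\,m-1}$ with negative coefficient swamps the $\log k!$ and $(m+1)\log R$ error terms — to be the main technical obstacle.

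It then remains to obtain a contradiction. Assume $f$ is hypercyclic. Its orbit is dense and $\H$ has no isolated points, so (a dense set with finitely many points removed is still dense) the orbit meets the nonempty open set $U=\{h\in\H:\|h-2\|_{B(0,R)}<\tfrac12\}$ infinitely often. Let $n_0$ be the first such time; by the core estimate, $c_n(f)\to0$. Pick a much later time $n_1$ with $P^{n_1}(f)\in U$. From $|c_{n_1}(f)f^{(n_1)}(0)|>\tfrac32$ and the Cauchy bound $|f^{(n_1)}(0)|\le n_1!\,M_f(R)\,R^{-n_1}$ (with $M_f(R)=\max_{|z|=R}|f|$) we get $\gamma_{n_1}>n_1\log R-\log n_1!-\log(\tfrac23M_f(R))$. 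On the other hand, applying the core estimate at $n_0$ with $m=n_1-n_0$ gives $\gamma_{n_1}\le\gamma_{n_0}-\delta\,2^{\,n_1-n_0-1}+(n_1-n_0+1)\log R+\log2$. Subtracting, the $n_1\log R$ terms cancel, all remaining terms are bounded in $n_1$ except $-\log n_1!$ and $-\delta\,2^{\,n_1-n_0-1}$, and one is left with $\delta\,2^{\,n_1-n_0-1}\le\log n_1!+\mathrm{const}$, which is false for $n_1$ large since $\log n_1!=O(n_1\log n_1)=o(2^{n_1})$. This contradiction shows that $P(f)=f(0)f'$ is not hypercyclic.
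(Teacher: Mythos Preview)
Your argument is correct. It is, however, a genuinely different route from the paper's. The paper proves non-hypercyclicity by a dichotomy through an invariant set: it defines $X=\{f:\limsup_n|c_n(f)(n!)^2R^n|=\infty$ for some $R>1\}$, shows $X$ is $P$-invariant and that $0\notin\overline X$ (so vectors in $X$ have orbits bounded away from $0$), and shows that any $f\notin X$ satisfies $\delta_0(P^{n}f)\to0$, hence cannot be hypercyclic either. Your approach instead fixes a single target neighbourhood $U=\{\|h-2\|_{B(0,R)}<1/2\}$ and shows that one visit to $U$, via Cauchy estimates, forces the doubly-exponential upper bound $\gamma_{n_0+m}\le\gamma_{n_0}-\delta\,2^{m-1}+O(m)$; this is then played off against the lower bound $\gamma_{n_1}\ge n_1\log R-\log n_1!-C$ coming from a second visit and the Cauchy bound on $f$ itself. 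Your method is more direct and makes very explicit why the scalar $c_n$ runs away: the $2^{m-1}$ weight attached to the negative coefficient $-2\log R$ swamps everything. The paper's invariant-set argument is less computational and packages the obstruction as ``either the orbit stays away from $0$, or it converges to $0$ along $\delta_0$''; it would transplant more readily to related polynomials where one only needs qualitative growth/decay of $c_n$. Two cosmetic points: your Cauchy bound on $B(0,R)$ is literally for radii $r<R$, so either take the sup over the closed disc in the definition of $U$ or note that the limit $r\to R$ is harmless; and your parenthetical ``dense set with finitely many points removed is still dense'' is exactly the right justification for infinitely many returns in a space without isolated points.
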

\begin{proof}
	The iterates of a function $f$ are of the form
	$P^n(f)=c_n(f)f^{(n)}$, with
	$$c_n(f)=f(0)^{2^{n-1}}f'(0)^{2^{n-2}}\ldots f^{(n-1)}(0).$$ 
	This fact can be easily proven by induction. Also the functions $c_n(f)$ can be
	constructed recursively as
	$$\begin{cases}
	c_1(f)=f(0);\\
	c_n(f)=c_{n-1}(f)^2 f^{(n-1)}(0).
	\end{cases}$$
	Let us define $X\sub \H$ as $X:=\{f\in \H: \limsup |c_n(f)(n!)^2R^n|=\infty, \textrm{ for some }R>1\}$.
	The proof of the proposition will be divided in three steps:
	\begin{enumerate}
		\item\label{s1} $X$ is $P$-invariant.
		\item\label{s2} $0$ is not in the closure of $X$.
		\item\label{s3} If $f\notin X$ then $f$ is not a hypercyclic vector for $P$.
	\end{enumerate}
	If we prove \eqref{s1}, \eqref{s2} and \eqref{s3} it clearly follows that $P$ is not hypercyclic.

	\emph{Proof of  \eqref{s1}.} Note that $c_{k+1}(f)=f(0)c_k(P(f))$. Take $f\in X$ and let $R>1$ such that $\limsup |c_n(f)(n!)^2R^n|=\infty$. Then $Pf\in X$, because
	$$
	\limsup \left|c_n(Pf)(n!)^2(R+1)^n\right|= \limsup \left|c_{n+1}(f)((n+1)!)^2R^{n+1}\frac{(R+1)^n}{f(0)(n+1)^2R^{n+1}}\right|=\infty.
	$$

	\emph{Proof of \eqref{s2}.}
	Suppose that $(f_k)_k \subseteq X$ is a null sequence. Since $c_n$ is continuous, there exists $(k_n)_n\sub \zN$ with $|c_n(f_{k_n})|<\frac{1}{2^{2^n}}.$
	Taking a subsequence, we may suppose that
	$$|c_n(f_{n})|<\frac{1}{2^{2^n}}.$$
	
	We claim that for each $n$, there exists $j\ge 0$ such that $|f_n^{(j+n)}(0)|>(j+n)^{j+n}$. Indeed, if $|f_n^{(j+n)}(0)|\le (j+n)^{j+n}$ for every $j\ge 0$ then we show by induction that
	$|c_{j+n}(f_n)|<\frac{1}{2^{2^{n+\frac{j}{2}}}}$ for every $j\ge 0$. We already know it for $j=0$. Suppose it is true for some $j$. Note that for every $n,j$, we have
	$$\frac{2^{(n+j) \log_2(n+j)}}{2^{(\sqrt 2-1)2^{n+\frac{1+j}{2}}}}\leq 1.$$
	Thus
	\begin{align*}
	|c_{n+j+1}(f_n)|&=|c_{n+j}^2(f_n)| |f_n^{(j+n)}(0)|
	\leq \frac{1}{2^{2^{n+1+\frac{j}{2}}}} |f_n^{(j+n)}(0)|\\
	&\leq\frac{1}{2^{2^{n+1+\frac{j}{2}}}} 2^{(n+j) \log_2(n+j)}\\
	&=\frac{2^{(n+j) \log_2(n+j)}} {2^{2^{n+\frac{j}{2}+\frac{1}{2}}} 2^{(\sqrt 2-1)2^{n+\frac{j}{2}+\frac{1}{2}}}}
	\leq \frac{1}{2^{2^{n+\frac{j+1}{2}}}}.
	\end{align*}
	This implies that $f_n$ is not in $X$, which is a contradiction.
	
	Therefore   $|f_n^{(j+n)}(0)|>(j+n)^{j+n}$ for some $j\ge 0$.
	Recall that the seminorms given by 
	$$\|f\|_k=\sup_{j} |f^{(j)}(0)| \frac{k^j}{j!},$$
	define the topology of $\H$ (see for example \cite[Example 27.27]{MeiVog97} or \cite{Per99}).
	
	For each $n$, let $j_n\ge 0$ be such that  $|f_n^{(j_n+n)}(0)|>(j_n+n)^{j_n+n}$. Thus,
	\begin{equation*}
	\|f_{n}\|_k=\sup_j |f_n^{(j)}(0)| \frac{k^j}{j!}>|f_n^{(j_n+n)}(0)| \frac{k^{j_n+n}}{(n+j_n)!}>1.
	\end{equation*}
	This contradicts the fact that $f_n\to0$.

	\emph{Proof of  \eqref{s3}.}
	Suppose that $|c_n(f)(n!)^2|\le L<\infty$ for every $n\ge 0$. Then, by the Cauchy inequalities, we have for some $M,r>0$,
	$$
	|\delta_0(P^{n+1}f)| =|c_{n+1}(f)f^{(n+1)}(0)| \le \frac{L}{(n!)^2}\frac{M(n+1)!}{r^{n+1}}\to 0.
	$$
	Therefore $f$ is not a hypercyclic vector.
\end{proof}

Aron and Miralles \cite{AroMir08} showed that the polynomial $P\in \PP(^2C^k(\zR))$ defined as $P(f)(z)=f(z+1)^2$ is hypercyclic. However, if we consider the analogous map, but in $\H$, the polynomial
 fails resoundingly to be hypercyclic. The rigidity of the holomorphic functions obstructs our search of hypercyclic homogeneous polynomials. In particular, Hurwitz's Theorem impose several restrictions to this kind of problem. 
This was already noted  in \cite{AroConPer07}, as the authors   were looking for algebras of hypercyclic vectors.
 \begin{proposition}
	Let $a,b\in \mathbb C$ and let $P\in \PP(^2\H)$ be  the polynomial defined by 
	$$
	P(g)(z)=g(z+a)g(z+b).
	$$
If $f$ is an accumulation point of an orbit of $P$ then  either $f$ is identically zero or $f(z)\ne0$ for every  $z\in\mathbb C$. In particular, $P$  is not hypercyclic.
\end{proposition}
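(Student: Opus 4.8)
The key observation is that the iterates have the form $P^n(g)(z) = c_n(g)\, g(z+a)\cdots$ — more precisely, $P^n(g)$ is a product of translates of $g$, so its zero set is the union of translates of the zero set of $g$. The plan is to exploit this together with Hurwitz's Theorem. First I would write down explicitly that $P^n(g)(z) = \prod_{j} g(z+t_j)^{m_{n,j}}$ for suitable points $t_j \in \{ia+(n-1-i)b : \dots\}$ (the multiset of sums $\alpha a + \beta b$ with $\alpha+\beta = n$ weighted dyadically) and positive integer multiplicities $m_{n,j}$ summing to $2^n$; in particular, if $g$ has a zero at some point $w$, then every $P^n(g)$ has a zero (at $w-t_j$ for each $j$), and if $g$ never vanishes then no $P^n(g)$ vanishes. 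So along any orbit $(P^n(g))_n$, either every iterate is zero-free, or every iterate has at least one zero.

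Now suppose $f$ is an accumulation point of the orbit of some $g$, say $P^{n_k}(g) \to f$ in $\H$, i.e. uniformly on compact sets. Consider the two cases. If $g$ is zero-free, then each $P^{n_k}(g)$ is zero-free on $\C$; by Hurwitz's Theorem the limit $f$ is either identically zero or zero-free on $\C$, which is exactly the dichotomy claimed. If instead $g$ has a zero at some point $w$, then $f \equiv 0$ cannot happen via a harmless route, but more to the point we must still land in the dichotomy: here I would argue that the zeros of $P^{n_k}(g)$ include the point $w - b(n_k-1)$ (taking the term with $\alpha = 0$, $\beta = n$, whose coefficient in the exponent is $1$) and more relevantly an \emph{abundance} of zeros forcing, via Hurwitz again, that any locally uniform limit $f$ either vanishes identically or has the property that each of its zeros is a limit of zeros of the $P^{n_k}(g)$. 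The cleanest formulation: if $f \not\equiv 0$ and $f(z_0) = 0$ for some $z_0$, then by Hurwitz $z_0$ is a limit of zeros $z_k$ of $P^{n_k}(g)$; but then I need to derive a contradiction from $f$ being a genuine entire limit, using that the zeros of $P^{n_k}(g)$ come in the rigid translated families $w_i - t_{j}$ with the translation points $t_j$ escaping to infinity in a controlled way, so that accumulation of such zeros at a finite $z_0$ forces $f$ to have zeros accumulating, contradicting that $f$ is entire and not identically zero (whose zeros are isolated) — unless $g$ was zero-free to begin with.

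Let me restructure to avoid that last delicate point. The robust argument is: \emph{every} $P^n(g)$ is, up to the scalar $c_n(g)$, a product of translates of $g$; so $P^n(g)$ vanishes at $z$ iff $g$ vanishes at $z + t_j$ for some $j$. Suppose $f$ is an accumulation point of $Orb_P(g)$ with $f \not\equiv 0$, and suppose for contradiction $f(z_0) = 0$. Pick a small closed disc $\overline{D}$ around $z_0$ on whose boundary $f$ does not vanish and which contains no other zero of $f$; then $\min_{\partial \overline D}|f| =: \rho > 0$. For $n_k$ large, $|P^{n_k}(g) - f| < \rho$ on $\partial \overline D$ and also $P^{n_k}(g)$ is close to $f$ on the whole disc, so by Hurwitz $P^{n_k}(g)$ has exactly as many zeros (with multiplicity) in $D$ as $f$ does, namely the order of vanishing of $f$ at $z_0$, which is finite and independent of $k$. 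On the other hand, since $P^{n_k}(g)(z) = c_{n_k}(g)\prod_j g(z+t_j^{(k)})^{m_{j}^{(k)}}$ and $g \not\equiv 0$ (else $f \equiv 0$), the number of zeros of $P^{n_k}(g)$ in $D$ counted with multiplicity equals $\sum_j m_j^{(k)} \cdot \#\{$zeros of $g$ in $D + t_j^{(k)}\}$. If $g$ has \emph{any} zero at all, one shows the translation points $t_j^{(k)}$ range over a set that, as $n_k \to \infty$, forces this count to blow up (the number of translates is $2^{n_k}$-weighted and the zero set of $g$, being nonempty, gets hit), contradicting finiteness. The main obstacle is making this counting rigorous: one must check that among the $\lfloor R\rfloor$-type shifts $\alpha a + \beta b$ the multiplicities and the location of $g$'s zeros interact so that the zero count in the fixed disc $D$ genuinely diverges — handling the degenerate cases $a = b$, $a - b \in$ (lattice resonances), and $g$ having only finitely many zeros all of which might "miss" every translate of $D$ (which cannot persist once multiplicities pile up). So I would first dispose of the zero-free case cleanly by Hurwitz, then argue that if $g$ vanishes somewhere the iterates accumulate zeros without bound and hence their limit, if not identically zero, cannot have an isolated zero — impossible for a nonzero entire function — giving the dichotomy, and the non-hypercyclicity is immediate since a hypercyclic vector would have dense orbit, whose closure contains functions with zeros that are neither identically zero nor zero-free.
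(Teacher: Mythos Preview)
Your setup is right --- the iterate formula
\[
P^{k}(g)(z)=\prod_{j=0}^{k} g\bigl(z+ja+(k-j)b\bigr)^{\binom{k}{j}}
\]
(there is no extra scalar $c_k(g)$ here) and Hurwitz's Theorem are exactly the tools --- but the argument in the case where $g$ has a zero is incomplete, and you essentially say so yourself (``the main obstacle is making this counting rigorous''). Your claim that the zero count $\sum_j m_j^{(k)}\cdot\#\{\text{zeros of }g\text{ in }D+t_j^{(k)}\}$ blows up just because $g$ has \emph{some} zero and the weights sum to $2^{n_k}$ is not justified: a priori every translate $D+t_j^{(k)}$ could miss the zero set of $g$, except for the single index $j_l$ that Hurwitz hands you, and that one might have $j_l\in\{0,n_k\}$ and hence weight $\binom{n_k}{j_l}=1$.

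What closes the gap is a specific use of the binomial structure. Hurwitz gives you $j_l$ and $w\in B(z_0,\delta)$ with $g\bigl(w+j_la+(k_l-j_l)b\bigr)=0$. Now look at the \emph{neighboring} factor: the same zero of $g$ forces $g\bigl(z+(j_l\pm1)a+(k_l-j_l\mp1)b\bigr)$ to vanish at $z=w\mp(a-b)$, and this factor appears in $P^{k_l}(g)$ with exponent $\binom{k_l}{j_l\pm1}$. If $1\le j_l\le k_l-1$ already $\binom{k_l}{j_l}\ge k_l$; if $j_l=0$ or $j_l=k_l$ then the neighbor has exponent $\binom{k_l}{1}=k_l$. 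Either way $P^{k_l}(g)$ has a zero of order $\ge k_l$ in one of the three discs $B(z_0,\delta)$, $B(z_0\pm(a-b),\delta)$, all contained in the fixed disc $B(z_0,2(|a|+|b|))$. Since $f\not\equiv0$ has only finitely many zeros there, Hurwitz on this larger disc gives the contradiction. This single-step shift by $a-b$ is the missing idea; once you add it your proof is complete and coincides with the paper's.
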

\begin{proof}
	Note that if $g\in \H$, then 
	 \begin{equation}\label{hurw}
	 \displaystyle P^k(g)(z)=\prod_{j=0}^k  g(z+ja+(k-j)b)^{\binom{k}{j}}.
	 \end{equation}
	 Let $f\in \H$ and suppose that $f$ has a zero of order $m\ge 1$ at $z_0$. If  $P^{k_l}(g)$ converges uniformly to $f$ on $B(z_0,2(|a|+|b|))$, by Hurwitz's Theorem, for each sufficiently small $\delta>0$, there is some $l_0$ such that for $l\geq l_0$ the number of zeros of $P^{k_l}(g)$ in $B(z_0,\delta)$ is exactly  $m$. Thus for each $l\geq l_0$ there is some $j_l\le k_l$ such that  $g(\cdot+j_la+(k_l-j_l)b)$ has a zero of positive order in $B(z_0,\delta)$. But this implies, by \eqref{hurw}, that   $P^{k_l}(g)$ must have another zero of order $\ge k_l$ in  $B(z_0+a-b,\delta)$ (or in $B(z_0-a+b,\delta)$ if $j_l=k_l$), and therefore  $f$ must be identically zero.
	 \end{proof}

It was proved in \cite{AroConPer07} that, in contrast with the translation operator, the differentiation operator does admit an algebra of hypercyclic vectors. However, Hurwitz's Theorem also prevents powers of the differentiation operator to be hypercyclic.
 \begin{proposition}
 Let $P\in \PP(^2\H)$ be one of the following polynomials
 \begin{itemize}
  \item[(i)]   $P(g)(z)=g'(z)^2$,
  \item[(ii)]  $P(g)(z)=g(z)g'(z).$ 
 \end{itemize}
Then, $P$  is not hypercyclic. 
\end{proposition}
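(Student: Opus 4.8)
The plan is to mimic the argument used for the polynomial $P(g)(z)=g(z+a)g(z+b)$, exploiting the fact that both polynomials in the statement produce iterates which are large products of derivatives, and that zeros (including the zeros coming from derivatives) propagate in an uncontrollable way by Hurwitz's Theorem. First I would record the explicit formula for the iterates. For case (i), $P(g)=g'^2$, one checks by induction that $P^k(g)=\big(g^{(k)}\big)^{2^k}$ up to a constant, since $P^{k}(g)=\big(P^{k-1}(g)'\big)^2$ and differentiating a $2^{k-1}$-th power introduces a factor $2^{k-1}$ times $g^{(k-1)}$ raised to a power one less; more precisely $P^k(g)(z)=2^{c_k}\,g^{(k)}(z)^{2^{k-1}}\cdot\big(\text{lower order factors}\big)$, and the key structural point is that \emph{every} nonzero iterate is, up to a never-vanishing factor, a high power of $g^{(k)}$. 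For case (ii), $P(g)=g\cdot g'$, one gets $P^k(g)=c_k(g)\prod_{j=0}^{k} \big(g^{(j)}\big)^{\binom{k}{j}}$ or a similar product in which both $g$ and all its derivatives up to order $k$ appear with large binomial exponents; again the essential feature is that a zero of $g^{(j)}$ for some $j\le k$ forces a zero of enormous order in $P^k(g)$.

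The main step is then to argue as in the previous proposition: suppose $P^{k_l}(g)\to f$ uniformly on a disc $B(z_0,\rho)$ with $\rho$ large enough (a few times $|a|+|b|$ — here with $a=b=0$ it only needs to be a fixed small disc, but for case (ii) one needs room for the derivative shifts to not matter, which they don't since differentiation is local). If $f$ has a zero of finite order $m\ge 1$ at $z_0$ and $f\not\equiv 0$, then by Hurwitz's Theorem, for $l$ large, $P^{k_l}(g)$ has exactly $m$ zeros (with multiplicity) near $z_0$. But from the product formula, any zero of $P^{k_l}(g)$ at a point $w$ near $z_0$ comes from a zero of some factor $g^{(j_l)}$ at $w$, and that same zero of $g^{(j_l)}$ (hence of $g^{(j_l+1)}$'s antiderivative data — actually one must be slightly careful: a zero of $g^{(j)}$ is not a zero of $g^{(j+1)}$) contributes with multiplicity at least $\binom{k_l}{j_l}\ge k_l$ to $P^{k_l}(g)$. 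For $k_l > m$ this contradicts the Hurwitz count. Hence $f$ has no finite-order zero, so $f\equiv 0$ or $f$ is zero-free. Since the constant function $1$, and indeed most entire functions, have zeros, no orbit can be dense: $P$ is not hypercyclic.

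For case (i) the argument is even cleaner, since $P^{k}(g)=(\text{never-zero const})\cdot g^{(k)}(z)^{2^{k-1}}$ has \emph{every} zero of multiplicity divisible by $2^{k-1}$, so any Hurwitz limit $f$ of an orbit has the property that near any of its zeros the approximants have zeros of multiplicity $\ge 2^{k_l-1}\to\infty$, forcing $f$ to be zero-free or identically zero, and again $P$ is not hypercyclic. The main obstacle I anticipate is getting the iterate formula for case (ii) exactly right and verifying that the exponents are genuinely large (at least linear in $k$), and pinning down the harmless never-vanishing constant factors (the powers of $2$ from repeated differentiation) so that they do not interfere with the zero-counting; but this is a bookkeeping induction rather than a conceptual difficulty. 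The one genuinely delicate point is that a zero of $g^{(j)}$ need not be a zero of neighboring derivatives, so one must track which single factor in the product carries the large exponent and confirm its zero survives in the limit — exactly the role played by the term $B(z_0+a-b,\delta)$ in the preceding proof, here replaced by the observation that the relevant derivative order $j_l$ satisfies $\binom{k_l}{j_l}\ge k_l$ whenever $1\le j_l\le k_l-1$, and the endpoint cases $j_l\in\{0,k_l\}$ are handled separately since then the exponent $\binom{k_l}{j_l}=1$ but the zero then lies in a factor $g$ or $g^{(k_l)}$ whose presence at the previous step already forced a high-multiplicity zero.
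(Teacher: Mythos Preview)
Your proposal rests on iterate formulas that are not correct, and this is a genuine gap rather than bookkeeping. For case~(i), already at $k=2$ one has
\[
P^2(g)=\bigl((g')^2\bigr)'^{\,2}=(2g'g'')^2=4(g')^2(g'')^2,
\]
which is \emph{not} a constant times $(g'')^{2}$; and $P^3(g)=64(g')^2(g'')^2\bigl((g'')^2+g'g'''\bigr)^2$, so a simple zero of $g'$ at $z_0$ with $g''(z_0)\ne 0$ gives a zero of $P^3(g)$ of order exactly $2$, not of order divisible by $2^{3-1}=4$. Thus the claim that every zero of $P^k(g)$ has multiplicity divisible by $2^{k-1}$ is false. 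For case~(ii), $P^2(g)=gg'\bigl((g')^2+gg''\bigr)$, which does not factor as $c\,g(g')^2g''$; differentiation does not turn products into products, so the binomial-product formula you propose simply does not hold and cannot be patched by ``lower order factors'' with controllable zeros. Consequently the multiplicity-blow-up mechanism you want (a factor with exponent $\ge\binom{k_l}{j_l}$) is not available.

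The paper's proof follows a different, much shorter idea: an \emph{invariance of zeros}. For (i), observe that if $h'(z_0)=0$ then $(P(h))'=2h'h''$ also vanishes at $z_0$; hence once some iterate has $\bigl(P^{n}(g)\bigr)'(z_0)=0$, this persists for all later $n$ at the \emph{same} point $z_0$. Now if $g$ were hypercyclic, a subsequence $P^{n_k}(g)$ would tend to $z^2$, so $P^{n_k}(g)'\to 2z$, and Hurwitz produces a zero $z_0\in B(0,1)$ of $\bigl(P^{n_{k_0}}(g)\bigr)'$; this zero then obstructs the orbit from approaching any function whose derivative is nonzero at $z_0$. The argument for (ii) is analogous, using that zeros of $h$ are zeros of $P(h)=hh'$. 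If you want to rescue something closer to your line, note that for (i) every iterate $P^k(g)=(\,(P^{k-1}g)'\,)^2$ is a perfect square, so by Hurwitz no accumulation point of the orbit can have a simple zero; this already prevents the orbit from approaching $z$ and proves (i)---but the analogous trick does not directly handle (ii).
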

\begin{proof}
We only prove (i), the proof of (ii) is analogous.  Note that if $g'(z_0)=0$ then $(P(g))'(z_0)=0$. Suppose that $P^{n_k}(g)\to z^2$. Then $P^{n_k}(g)'\to 2z$. By Hurwitz's Theorem, there exists $k_0$ such that for every $k\ge k_0$, $P^{n_k}(g)'$ has a zero of order 1 in $B(0,1)$. Thus $P^{n}(g)'$ has a zero of order at least 1 in $B(0,1)$ for every $n\ge n_{k_0}$. Therefore $g$ is not hypercyclic for $P$.
\end{proof}

 \end{document}